\newtheorem{theorem}{Theorem}[section]
\newtheorem{lemma}[theorem]{Lemma}
\newtheorem{corollary}[theorem]{Corollary}
\theoremstyle{definition}
\numberwithin{equation}{section}
\newcommand\sys{\mathrm{sys}}
\newcommand\FillRad{\mathrm{FillRad}}
\author{Mikhail G. Katz}\address{M. Katz, Department of Mathematics,
Bar Ilan University, Ramat Gan 5290002 Israel}
\email{katzmik@macs.biu.ac.il}
\begin{document}

\thispagestyle{empty}

%\huge

\title {A quantitative obstruction to collapsing surfaces}

\maketitle

\begin{abstract}
We provide a quantitative obstruction to collapsing surfaces of genus
at least 2 under a lower curvature bound and an upper diameter bound.

Keywords: curvature; diameter; volume; filling radius; systole;
Gromov--Hausdorff distance
\end{abstract}

%\tableofcontents

\section{Introduction}

S. Alesker posed the following question at MathOverflow \cite{Al16}.
Let~~$(M_i)~$ be a sequence of 2-dimensional orientable closed
surfaces of genus~$g\geq2$ endowed with smooth Riemannian metrics of
Gaussian curvature at least~$-1$ and diameter at most~$D$. By the
Gromov compactness theorem, one can choose a subsequence converging in
the Gromov--Hausdorff (GH) sense to a compact Alexandrov space with
curvature at least~$-1$ and Hausdorff dimension~$0$,~$1$, or~$2$.  Let
us assume that the limit space has dimension~$1$.  Then it is either a
circle or a segment.  Can these possibilities (circle and segment) be
obtained in the limit~$M$ of~$(M_i)$?  We show that these
possibilities cannot occur, and quantify this statement by providing
an explicit lower bound for the filling radius of~$M$.  For related
results see~\cite{Sa17}.

\section{Impossibility of collapse}

We prove the impossibility of collapse in dimension 2, in the
following sense.

\begin{theorem}
\label{t21}
The distance between a strongly isometric map from a closed orientable
surface $M$ of genus $g\geq2$ of Gaussian curvature $K\geq-1$ and
diameter at most $D$ to a metric space $Z$, and a map from $M$ to a
graph in $Z$, is at least~$\frac{\pi(g-1)}{3\sinh D}$.
\end{theorem}

Thus we obtain a quantitative lower bound rather than merely the
nonexistence of Shioya--Yamaguchi-type collapse to spaces of positive
codimension (see \cite{Ya91}, \cite{SY}).

\begin{corollary}
\label{c22}
Let~$D>0$.  GH limits of metrics on a closed orientable surface of
genus~$g\geq2$ with Gaussian curvature at least~$-1$ and diameter at
most~$D$ are necessarily~$2$-dimensional.
\end{corollary}

Recall that the \emph{systole} of a Riemannian manifold~$M$ is the
least length of a noncontractible loop of~$M$.  For an overview of
systolic geometry see \cite{Ka06}.

The \emph{filling radius}~$\FillRad\,M$ of a closed~$n$-dimensional
manifold~$M$ is defined as the infimum of all~$\epsilon>0$ such that
the inclusion of~$M$ in its~$\epsilon$-neighborhood in any strongly
isometric embedding of~$M$ in a Banach space sends the fundamental
homology class~$[M]$ of~$M$ to the zero class, by means of the induced
homomorphism on~$H_n(M)$.  Here the embedding can be taken to be into
the space of bounded functions on~$M$ which sends a point~$p\in M$ to
the distance function from~$p$.  This embedding is strongly isometric
(ambient distance restricted to~$M$ coincides with intrinsic distance
on~$M$) if the function space is equipped with the sup-norm.

\begin{lemma}[Gromov's lemma]
\label{l21}
The systole of an aspherical manifold~$M$ is at most six times the
filling radius of~$M$.
\end{lemma}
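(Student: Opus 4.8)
The plan is to argue by contradiction. Write $\rho$ for the filling radius of $M$ and suppose the systole of $M$ exceeds $6\rho$. Fix $r$ with $\rho<r<\tfrac16\,\mathrm{sys}(M)$ and view $M$ inside $L^\infty(M)$ via the distance-function embedding described above, which is isometric for the sup-norm; let $U$ be the open $r$-neighborhood of $M$ there. Since $r>\rho$, the definition of the filling radius says exactly that $[M]$ dies in $H_2(U;\Z)$; concretely, the simplicial fundamental cycle $z$ of any triangulation of $M$ bounds a singular $3$-chain in $U$. The aim is to push such a filling back into $M$, exhibiting $[M]$ as a boundary in $M$ itself — impossible, since $M$ is a closed orientable surface, so $H_2(M;\Z)\cong\Z$ with $[M]$ a generator.

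To do this, fix a geodesic triangulation of $M$ all of whose edges are shorter than a constant $\delta$ chosen with $6r+3\delta<\mathrm{sys}(M)$, and let $z$ be its fundamental cycle. Pick a singular $3$-chain $a$ in $U$ with $\partial a=z$, and barycentrically subdivide $a$ (hence $z$ on the boundary) enough times that every singular simplex occurring has image of $L^\infty$-diameter less than $\delta$; the subdivided boundary cycle is still the fundamental cycle of a geodesic triangulation with edges shorter than $\delta$, still called $z$. Now straighten this subdivided filling into a $3$-chain $\bar a$ in $M$, skeleton by skeleton on the ordered system of iterated faces of its simplices. To each occurring vertex — a point $p\in U$ — assign a point $\bar p\in M$ with $d(p,\bar p)<r$, taking $\bar p=p$ whenever $p$ already lies on $M$, so that the vertices of $z$ are fixed. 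To each occurring edge, whose two assigned endpoints then lie within $2r+\delta$ of each other in $M$ by the triangle inequality, assign a minimizing geodesic of $M$ between them (retaining the original geodesic edge for edges of $z$). To each occurring $2$-simplex, the assigned boundary is a loop of $M$ of length at most $3(2r+\delta)=6r+3\delta<\mathrm{sys}(M)$, hence null-homotopic by the definition of the systole, so it extends over the triangle (again retaining the original geodesic triangles of $z$). Finally, to each occurring $3$-simplex the assigned boundary is a map $S^2\to M$, which extends over the simplex because $M$ is aspherical: by the Cartan--Hadamard theorem — whose input is the Jacobi field estimate excluding conjugate points in nonpositive curvature — the universal cover $\widetilde M=\mathbb{H}^2$ is diffeomorphic to $\R^2$, hence contractible, so $\pi_2(M)=0$. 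These choices agree along shared faces by construction and assemble into $\bar a$; its boundary is the straightened boundary of the filling, which consists of the original geodesic simplices, so $\partial\bar a=z$. Hence $[M]=[z]=0$ in $H_2(M;\Z)$, a contradiction, and so $\mathrm{sys}(M)\le 6\rho$.

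All the homotopy-theoretic inputs are elementary: $H_2(M;\Z)\cong\Z$ for a closed orientable surface, $\pi_2(M)=0$ from contractibility of $\widetilde M$, null-homotopy of loops shorter than the systole, and the obstruction-free extension of maps over cells once the relevant homotopy group vanishes. The step I expect to be the main obstacle to a fully rigorous write-up is the bookkeeping in the straightening: one must realize the given singular filling as a genuinely combinatorial object — for instance as the image of a compact oriented $3$-dimensional pseudomanifold with boundary whose boundary is the chosen triangulation of $M$, equivalently as a coherent family of singular simplices indexed by iterated faces — confirm that barycentric subdivision makes all simplices geometrically small in $L^\infty(M)$, and verify that the straightened boundary faces cancel in pairs on the interior and leave exactly $z$. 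This is the standard Federer--Fleming/Gromov device for fillings \cite{Gr83}, and with it in place the estimate $6r+3\delta<\mathrm{sys}(M)$ is precisely what makes the $2$-dimensional stage work; this is where the constant $6$ — three triangle edges, each with both endpoints within $r$ of $M$ — enters.
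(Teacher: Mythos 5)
Your argument is correct and follows essentially the same route as the paper's proof: contradict a filling in the $r$-neighborhood by retracting it to $M$ skeleton by skeleton, using nearest points for vertices, minimizing geodesics for edges, the systole bound $6r+3\delta<\mathrm{sys}(M)$ for $2$-cells, and $\pi_2(M)=0$ for the rest. Your version is if anything slightly more careful, making explicit the mesh parameter $\delta$ that the paper absorbs into the word ``infinitesimal.''
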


\begin{proof}
Consider a strongly isometric embedding of the surface~$M$ into a
Banach space~$B$.  The space~$B$ can be assumed finite-dimensional if
the metric condition is relaxed to a requirement of being bilipschitz
with to a bilipschitz factor arbitrarily close to~$1$; see \cite{KK}.
Suppose~$M$ is ``filled" (in the homological sense) by a chain~$C$ (in
the sense that~$M$ is the boundary of~$C$).  Then the induced
homomorphism~$H_n(M)\to H_n(C)$ sends~$[M]$ to the zero class.
Consider a triangulation of~$C$ into infinitesimal simplices (here the
term ``infinitesimal" is used informally in its meaning ``sufficiently
small" though this could be rendered rigorous as in \cite{15d}).

We argue by contradiction.  Let~$R>0$ be strictly smaller than a sixth
of the systole.  Suppose the chain~$C$ is contained in an
open~$R$-neighborhood of~$M$ in~$B$.  We will retract~$C$ back to~$M$,
while fixing the subset~$M\subseteq C$, contradicting the fact that
the nonvanishing fundamental class~$[M]$ is sent to a zero class
in~$C$.

For each vertex of the triangulation of~$C$, we choose a nearest point
of~$M$.  To extend the retraction to the~$1$-skeleton of~$C$, we map
each edge (of a triangle of the triangulation) to a minimizing path
joining the images of the two vertices in~$M$.  The length of such a
minimizing path is less than~$2R$ (plus the infinitesimal sidelength
of the triangle) by the triangle inequality.  Hence the boundary of
each~$2$-cell of the triangulation is sent to a loop of length at
most~$6R$ (plus an infinitesimal).  Since this length is less than the
systole of~$M$, the map can now be extended to the~$2$-skeleton
of~$C$.

To extend the map to the~$3$ skeleton, note that the universal cover
of~$M$ is contractible and hence~$\pi_2(M)=0$, and similarly for the
higher homotopy groups.  Therefore the skeletal retraction extends to
all of~$C$ inductively.  The contradiction completes the proof of the
lemma.
\end{proof}

\begin{proof}[Proof of Theorem~\ref{t21}]
We exploit Gromov's notion of the filling radius of a
manifold~\cite{Gr83}.  The argument relies only on basic Jacobi field
estimates and basic homotopy theory.  We seek a suitable lower bound
so as to rule out positive-codimension collapse.  Choose a
noncontractible closed geodesic~$\gamma\subseteq M$ of length equal to
the systole~$\sys(M)$.  Consider the normal exponential map
along~$\gamma$.  Using the lower curvature bound, we obtain an upper
bound on the total area of~$M$ as~$2\,\sys(M) \sinh(D)$ where~$D$ is
the diameter.  The bound follows by applying Rauch bounds on Jacobi
fields (this is an ingredient in the proof of Toponogov's theorem);
see e.g., Cheeger--Ebin \cite[Theorem\;5.8, pp.\;97--98]{Ch08}.  The
bound results from comparison with the area of a hyperbolic collar of
width~$D$ around a closed geodesic of the same length as~$\gamma$.
Therefore the systole is bounded below as follows:
\begin{equation}
\label{bound}
\sys(M) \geq \frac{\text{area(M)}}{2\sinh D}.
\end{equation}
Meanwhile the area is bounded below by the Gauss--Bonnet theorem:
\[
\text{area}(M)\geq -\int_M K= 2\pi(2g-2)
\]
where~$g$ is the genus.  Furthermore the filling radius of~$M$ is
bounded below by a sixth of the systole by Gromov's
Lemma~\ref{l21}.   Therefore the bound \eqref{bound} implies
\begin{equation}
\label{e21}
\FillRad(M)\geq\frac16\sys(M)\geq \frac{\text{area}(M)}{12\sinh D}
\geq \frac{\pi(g-1)}{3\sinh D}.
\end{equation}
The theorem now follows from the fact the distance between a strongly
isometric map from $M$ to a metric space $Z$ and a map from $M$ to a
graph in $Z$ is bounded below by the filling radius; see e.g.,
\cite[p.\;127, Example]{Gr83}.  This proves that aspherical surfaces
of curvature bounded below by~$-1$ with diameter bounded above by~$D$
cannot collapse, so that a GH limit is necessarily~$2$-dimensional, as
follows.
\end{proof}

To prove Corollary~\ref{c22}, note that if a metric on $M$ is
sufficiently close to a finite graph $\Gamma$ in the sense of the GH
distance, then the construction of the proof of Lemma~\ref{l21}
produces a map from $M$ to $\Gamma$ which is close to the embedding of
$M$ in $Z$, contradicting the lower bound \eqref{e21}.

\end{document}